\documentclass[12pt,onecolumn]{IEEEtran}
\everymath{\displaystyle}
\usepackage{graphicx}
\usepackage{epsfig}
\usepackage{float}
\usepackage{amsmath}
\usepackage{amssymb}
\usepackage{amsthm}
\usepackage{booktabs}
\usepackage{subfigure}
\usepackage{geometry}
\usepackage{bm}

\newtheorem{thm}{Theorem}

\newtheorem{remark}{Remark}
\newtheorem{assump}{Assumption}

\begin{document}
\title{Adaptive High Order Sliding Mode Observer Based Fault Reconstruction
for a Class of Nonlinear Uncertain Systems : Application to PEM Fuel Cell System}
\author{Jianxing Liu, Salah Laghrouche and Maxime Wack\\
Laboratoire IRTES, Universit\'e de Technologie de Belfort-Montb\'eliard, Belfort, France.
\\ jiang-xing.Liu@utbm.fr; salah.laghrouche@utbm.fr; maxime.wack@utbm.fr}
\maketitle
\begin{abstract}                
\boldmath
This paper focuses on observer based fault reconstruction for
a class of nonlinear uncertain systems with Lipschitz nonlinearities.
An adaptive-gain Super-Twisting (STW) observer is developed for observing the system states, where the adaptive law compensates the uncertainty in parameters. The inherent equivalent output error injection feature of STW algorithm is then used to reconstruct the fault signal.
The performance of the proposed observer is validated through a Hardware-In-Loop (HIL)
simulator which consists of a commercial twin screw compressor and
a real time Polymer Electrolyte Membrane fuel cell emulation system.
The simulation results illustrate the feasibility and effectiveness
of the proposed approach for application to fuel cell systems.
\end{abstract}
\section{INTRODUCTION}
Many physical systems cannot be operated safely without reliable Fault Detection and Isolation (FDI) schemes in place. Such systems include Fuel Cells, which use Hydrogen as fuel, and system failure or mechanical faults can lead to catastrophic consequences. FDI is usually achieved by generating residual signals, obtained from the difference between the actual system outputs and their estimated values calculated from dynamic models. Such approach usually involves two steps: the first step is
to decouple the faults of interest from uncertainties and the second step
is to generate residual signals and detect faults by decision logic.
Several practical techniques for these steps have been proposed in contemporary literature,
for example geometric approaches \cite{de2001geometric},
$H_\infty$-optimization technique \cite{qiu1993robust,hou1996lmi},
observer based approaches
(e.g. adaptive observers \cite{yang1995nonlinear,wang1997use},
high gain observers \cite{besanccon2003high},
unknown input observers \cite{hou1992design,chen1996design,chen1999robust}).
However, in active fault tolerant control (FTC) systems
\cite{zhang2008bibliographical},
not only the fault needs to be detected and isolated but also needs
to be estimated so that its effect can be compensated by reconfiguring
the controller \cite{edwards2006sensor,alwi2008fault}.
Hence, there is need for fault reconstruction schemes which estimate
the fault's shape and magnitude.

Sliding mode technique is known for its insensitivity to external disturbances, high accuracy and finite time convergence.
Sliding mode observers have been widely used
for fault reconstruction.
\cite{edwards2000sliding} proposed a fault reconstruction approach based on equivalent output error injection. In this method, the resulting residual signal can approximate the actuator fault to any required accuracy.
Based on the work of \cite{edwards2000sliding}, \cite{tan2002sliding} proposed a sensor fault reconstruction method for well-modeled linear systems by considering a linear matrix inequality (LMI) observer design. This approach is impractical, as there is no explicit consideration of disturbance or uncertainty. To overcome this, \cite{tan2003sliding} proposed an FDI scheme for a class of linear systems with uncertainty, using LMI for minimizing the $L_2$ gain between the uncertainty and the fault reconstruction signal.
It should be noted that, only linear systems are considered in the above works,
and only few works have been reported for nonlinear systems.
\cite{jiang2004fault} proposed a sliding mode observer based fault estimation approach for a class of nonlinear systems with uncertainties.
\cite{yan2007nonlinear} proposed a precise fault reconstruction scheme, based on equivalent output error injection,
for a class of nonlinear systems with uncertainty. A sufficient condition based on LMI is presented for the existence and stability of a robust sliding mode observers.
The limitation is that requires a strong structural condition of the distribution associated with uncertainties.
This structural constraint is relaxed in \cite{yan2008adaptive}, where the fault distribution vector and the structure matrix of the uncertainty are allowed to be functions of the system output and input.
However, in these papers involving uncertainty and fault,
most of all require that the uncertainty and fault are bounded with known bounds.
Although in the job of \cite{yan2008adaptive}, it is not necessary to known the bounds on uncertainty \emph{a priori}, but it still requires that the boundary of fault signal is known.

From the application point of view, fuel cell systems have also been the subject of many FDI studies. \cite{ingimundarson2008model} developed a hydrogen leak detection method without the use of relative humidity sensors. \cite{escobet2009model} designed a fault diagnosis methodology for PEMFCs where the residuals are generated from the differences between the PEM simulator included with a set of typical faults and non-faulty fuel cell model. \cite{de2011lpv} proposed a Linear Parameter Varying (LPV) observer based fault detection approach for PEMFC, the residuals are defined as the differences between the process measurements and the outputs of a LPV Luenberger observer.
The above works have not considered the design of state estimation, parameter identification and directly fault reconstruction, simultaneously.
Even more, the models used in \cite{de2011lpv,pukrushpan2004control} are obtained through a Jacobian linearization of the PEMFC nonlinear dynamic model around the optimal operating point.
It is well known that the fuel cell system exhibits highly nonlinear dynamics and a fuel cell's efficiency is highly dependent on the operating conditions, such as temperature, humidity, and air flow. A set of auxiliary elements (valves, compressor, sensors, etc.) are needed to make the fuel cell work at the optimal operating point. The problems of state estimation and fault detection arise because of incomplete knowledge of the parameter and states of the system.

In this paper, fault reconstruction is studied for a class of nonlinear uncertain systems
with Lipschitz nonlinearities. This class of systems sufficiently defines the nonlinear dynamics of the Fuel Cell system. The uncertain parameter is estimated by a simple adaptive update law.
Estimate of the uncertain parameter is then injected into a Second Order Sliding Mode (SOSM) observer based on Super Twisting algorithm, which maintains a sliding motion even in the presence of faults. The faults are reconstructed by analyzing the equivalent output error injection
\cite{edwards2000sliding,spurgeon2008sliding}.
We extend the result of \cite{yan2008adaptive} to a class of nonlinear uncertain systems with Lipschitz nonlinearities. The difficulty of the a-priori knowledge of fault signal bound, as encountered in \cite{edwards2000sliding}, is overcome by the gain adaptation with respect to observation errors. The gains of Super Twisting (STW) algorithm are allowed to adapt based on the 'quality' of the obtained sliding mode, and the relation of the gains are chosen to make the algorithm hold homogeneity. Lyapunov analysis of the system with the adaptive observer demonstrates that the algorithm establishes ideal SOSM, as compared to a similar work by \cite{Shtessel2012759}, in which real Sliding Mode is established.

The Fault reconstruction scheme is applied on a PEMFC system which belongs to a class of nonlinear uncertain systems with Lipschitz nonlinearities.
The nonlinear model of a fuel cell system, presented in
\cite{talj2010experimental}, is used. This model has been validated in a large operating range. The adaptive-gain STW sliding mode observer estimates the system states (oxygen and nitrogen partial pressures and compress speed),
and an adaptive update law is employed to estimate the stack current, which is considered as an uncertain parameter. This eliminates the need of an extra current sensor. The fault considered in this study is a mechanical failure in the air circuit, which results in an abnormal air flow.
The fault signal is reconstructed by analyzing the equivalent output error injection, which is obtained on-line from comparisons between the
measurements from the sensor installed in the real system
and the outputs of the observer system.
In order to verify the effectiveness of the proposed approach of the
application to a fuel cell system, Hardware-In-Loop (HIL) tests are conducted on a test bench which consists of a commercial twin screw compressor
and a real time fuel cell emulation system
\cite{lemevs2006dynamic,randolf2006test,matraji2013}.
The test bench, which is presented in this work, disposes a
HIL simulator consists of a real time fuel cell system and a twin screw compressor.
These test results show that the fault detection scheme successfully reconstructs the faults.
\section{PROBLEM FORMULATION}
Consider the following nonlinear system,
\begin{equation}\label{eqn:nonlinear system}
\left\{
\begin{split}
\dot{x} &=Ax+g(x,u)+\phi(y,u)\theta+\omega(y,u)f(t),\\
y &=Cx,
\end{split}
\right.
\end{equation}
where $x\in \mathbb{R}^n$ is the system state vector, $u(t)\in \mathbb{R}^m$ is the control input which is assumed to be known, $y\in \mathbb{R}^p$ is the output vector.
$g(x,u)\in \mathbb{R}^n$ is Lipschitz continuous,
$\phi(y,u)\in \mathbb{R}^{n\times q}$ and
$\omega(y,u)\in \mathbb{R}^{n\times r}$ are assumed to be some
smooth and bounded functions with $p\geq q+r$.
The unknown parameter vector $\theta\in \mathbb{R}^q$ is assumed to be  constant and
$f(t)\in \mathbb{R}^r$ is smooth fault signal vector, which satisfies
\begin{equation}\label{condition:1}
\begin{split}
\|{f}(t)\|\leq \rho_1,\quad \|{\dot{f}}(t)\|\leq \rho_2.
\end{split}
\end{equation}
where $\rho_1,\rho_2$ are some positive constants that might be known or unknown.

Assume that $(A,C)$ is an observable pair, and there exists
a linear coordinate transformation
$z=Tx=\begin{bmatrix}
I_p&0\\
-H_{(n-p)\times p}&I_{n-p}
\end{bmatrix}
x=
\begin{bmatrix}
z^T_1&z^T_2
\end{bmatrix}
^T$, with $z_1\in \mathbb{R}^{p}$ and $z_2\in \mathbb{R}^{n-p}$, such that
\begin{itemize}
  \item $TAT^{-1}=\begin{bmatrix}
A_{11}&A_{12}\\A_{21}&A_{22}
\end{bmatrix}$, where the matrix $A_{22}
\in \mathbb{R}^{(n-p)\times (n-p)}$ depends on the gain matrix
$H_{(n-p)\times p}$ is Hurwitz stable.
  \item $CT^{-1}=
  \begin{bmatrix}
  I_p&0
  \end{bmatrix}
  $, where $I_p\in \mathbb{R}^{p\times p}$ is an identity matrix.
\end{itemize}
\begin{assump}\label{assump1}
There exists function $\omega_1(y,u)$ such that
\begin{equation}\label{fault distribution}
T\omega(y,u)=
\begin{bmatrix}
\omega_1(y,u)\\
0
\end{bmatrix}
\end{equation}
where $\omega_1(y,u)\in \mathbb{R}^{p\times r}$.
\end{assump}

System (\ref{eqn:nonlinear system}) is described by the following equations
in the new coordinate system,
\begin{equation}\label{eqn:z system}
\left\{
\begin{split}
\dot{z}=&TAT^{-1}z+Tg(T^{-1}z,u)+T\phi(y,u)\theta+T\omega(y,u)f(t),\\
y=&CT^{-1}z,
\end{split}
\right.
\end{equation}
By reordering the state variables, system (\ref{eqn:z system}) can be rewritten as
\begin{equation}\label{eqn:zsystem}
\left\{
\begin{split}
\dot{y}=&{A}_{11}y+{A}_{21}z_2
+g_1(z_2,y,u)+{\phi}_1(y,u)\theta+{\omega}_1(y,u)f(t),\\
\dot{z}_2=&{A}_{22}z_2+{A}_{21}y
+{g}_2(z_2,y,u)+{\phi}_2(y,u)\theta,\\
y=&z_1,
\end{split}
\right.
\end{equation}
where
\begin{equation}\label{eqn:sub system}
\begin{split}
T\phi(y,u)=&
\begin{bmatrix}
\phi_1(y,u)\\
\phi_2(y,u)
\end{bmatrix}
,\quad
    Tg(T^{-1}z,u)=
\begin{bmatrix}
g_1(z_2,y,u)\\
g_2(z_2,y,u)
\end{bmatrix}.
\end{split}
\end{equation}
$\phi_1:\mathbb{R}^{p}\times \mathbb{R}^m\rightarrow \mathbb{R}^{p},\ \
\phi_2:\mathbb{R}^{p}\times \mathbb{R}^m\rightarrow \mathbb{R}^{n-p},
\ \
g_1:\mathbb{R}^{p}\times \mathbb{R}^{n-p}\times \mathbb{R}^m\rightarrow \mathbb{R}^p,\ \
g_2:\mathbb{R}^{p}\times \mathbb{R}^{n-p}\times \mathbb{R}^m\rightarrow \mathbb{R}^{n-p}$.
\section{ADAPTIVE HIGH ORDER SLIDING MODE OBSERVER DESIGN}\label{sec:3}
Now, we consider the problem of an adaptive-gain STW observer for system (\ref{eqn:z system}),
in which the uncertain parameters are estimated with the help of an adaptive law.
Then, using the equivalent output injection of the observer,
we will develop the precise fault reconstruction method.
The basic assumption on the class of nonlinear systems under consideration
is as follows:
\begin{assump}\label{assump:assump4}
There exists a nonsingular matrix $\bar{T}\in \mathbb{R}^{p\times p}$, such that
\begin{equation}\label{eqn:W condition}
\bar{T}\begin{bmatrix}
{\phi}_1(y,u)&\omega_1(y,u)
\end{bmatrix}=
\begin{bmatrix}
{\Phi}_1(y,u)&0_{q\times r}\\
0_{r\times q}&{\Phi}_2(y,u)
\end{bmatrix}
\end{equation}
where ${\Phi}_1(y,u)\in \mathbb{R}^{q\times q},\ \ {\Phi}_2(y,u)\in \mathbb{R}^{r\times r}$
are both nonsingular matrices.
\end{assump}
\begin{remark}
The main limitation in the Assumption (\ref{eqn:W condition})
is that the matrices
$\begin{bmatrix}
{\phi}_1(y,u)&\omega_1(y,u)
\end{bmatrix}$
must be block-diagonalizable by elementary row transformations.
For simplicity, the case $p=q+r$ is considered,
the same results can be obtained in the case $p>q+r$
\cite{yan2008adaptive}.
\end{remark}
Let $z_y=\bar{T}y$, where $\bar{T}$ is defined in Assumption (\ref{assump:assump4}).
Then, System (\ref{eqn:z system}) can be described by,
\begin{equation}\label{eqn:two output system}
\left\{
\begin{split}
\dot{z}_{y}=&\bar{T}A_{11}\bar{T}^{-1}z_y+\bar{T}{A}_{21}z_2
+\bar{T}g_1(y,z_2,u)+
\begin{bmatrix}
{\Phi}_1(y,u)\\
0
\end{bmatrix}
\theta
+
\begin{bmatrix}
0\\
{\Phi}_2(y,u)
\end{bmatrix}f(t),\\
\dot{z}_2=&{A}_{22}z_2+{A}_{21}y
+{g}_2(z_2,y,u)+{\phi}_2(y,u)\theta,\\
y=&\bar{T}^{-1}z_y,
\end{split}
\right.
\end{equation}
where
\begin{equation}\label{matrix:transofrm matrix}
\begin{split}
&\bar{T}\cdot {A}_{11}=
\begin{bmatrix}
\bar{A}_{11}\\
\bar{A}_{12}
\end{bmatrix}
,\ \
\bar{T}\cdot {A}_{12}=
\begin{bmatrix}
\bar{A}_{21}\\
\bar{A}_{22}
\end{bmatrix},
\\
&\bar{T}\cdot {g}_1(y,z_2,u)=
\begin{bmatrix}
{W}_{g_1}(y,z_2,u)\\
{W}_{g_2}(y,z_2,u)
\end{bmatrix}.
\end{split}
\end{equation}
We define
$z_y=\begin{bmatrix}z_{y_1}&z_{y_2}\end{bmatrix}^T$,
where $z_{y_1}\in \mathbb{R}^{q}, z_{y_2}\in \mathbb{R}^{r}$.
Then, from (\ref{eqn:two output system}) and \eqref{matrix:transofrm matrix}, we obtain
\begin{equation}\label{eqn:1}
\left\{
\begin{split}
\dot{z}_{y_1}=&\bar{A}_{11}y+\bar{A}_{21}z_2
+{W}_{g_1}(y,z_2,u)+{\Phi}_1(y,u)\theta,\\
\dot{z}_{y_2}=&\bar{A}_{12}y+\bar{A}_{22}z_2
+{W}_{g_2}(y,z_2,u)+{\Phi}_2(y,u)f(t),\\
\dot{z}_2=&{A}_{21}y+{A}_{22}z_2
+{g}_2(y,z_2,u)+{\phi}_2(y,u)\theta,\\
y=&T^{-1}
\begin{bmatrix}
z_{y_1}&z_{y_2}
\end{bmatrix}^T,
\end{split}
\right.
\end{equation}

The observer is represented by the following dynamical system
\begin{equation}\label{eqn:two output system observer}
\left\{
\begin{split}
\dot{\hat{z}}_{y_1}=&\bar{A}_{11}y+\bar{A}_{21}\hat{z}_2
+{W}_{g_1}(y,\hat{z}_2,u)+{\Phi}_1(y,u)\hat{\theta}+\mu(e_{y_1}),\\
\dot{\hat{z}}_{y_2}=&\bar{A}_{12}y+\bar{A}_{22}\hat{z}_2
+{W}_{g_2}(y,\hat{z}_2,u)+\mu(e_{y_2}),\\
\dot{\hat{z}}_2=&{A}_{21}y+{A}_{22}\hat{z}_2
+{g}_2(y,\hat{z}_2,u)+{\phi}_2(y,u)\hat{\theta},\\
\hat{y}=&T^{-1}
\begin{bmatrix}
\hat{z}_{y_1}&\hat{z}_{y_2}
\end{bmatrix}^T,
\end{split}
\right.
\end{equation}
where the continuous output error injection $\mu(s)$ is given by the
super-twisting algorithm \cite{levant1993sliding}:
\begin{equation}\label{eqn:two stw law}
\left\{
\begin{split}
\mu(s)&=\lambda(t) |s|^{\frac{1}{2}}sign(s)+\varphi(s),\\
\dot{\varphi}(s)&=\alpha(t)sign(s),
\end{split}
\right.
\end{equation}
with the gains $\lambda(t),\alpha(t)$
are functions of time and are explained later on in Theorem 3.3. The observation errors are then defined as
$e_{y_1}=z_{y_1}-\hat{z}_{y_1},\ \ e_{y_2}=z_{y_2}-\hat{z}_{y_2}, e_2=z_2-\hat{z}_2,
\ \ \tilde{\theta}=\theta-\hat{\theta}$.
The estimate of $\theta$, denoted by $\hat{\theta}$, is given by the following adaptive law:
\begin{equation}\label{eqn:adaptive laws}
\begin{aligned}
\dot{\hat{\theta}}=&-K(y,u)\left(
\bar{A}_{11}y+\bar{A}_{21}\hat{z}_2+W_{g_1}(y,\hat{z}_2,u)+\Phi_1(y,u)\hat{\theta}
-\dot{z}_{y_1}\right),
\end{aligned}
\end{equation}
where $K(y,u)\in \mathbb{R}^{q\times q}$ is a matrix design parameter which will
be determined later.
\begin{remark}
It can be seen that the adaptive law (\ref{eqn:adaptive laws})
depends upon $\dot{z}_{y_1}$.
A real time robust exact differentiator
proposed in \cite{levent1998robust} can be used to estimate the time derivative of
${z}_{y_1}$ in finite time.
The differentiator has the following form
\begin{equation}\label{differentiator}
\left\{
\begin{split}
\dot{z}_0 =& -\lambda_0L_0^{\frac{1}{2}}|z_0-z_{y_1}|^{\frac{1}{2}}sign(z_0-z_{y_1})+z_1,\\
\dot{z}_1 =& -\alpha_0L_0sign(z_0-z_{y_1}),
\end{split}
\right.
\end{equation}
where $z_0$ and $z_1$ are the real time estimations of $z_{y_1}$ and
$\dot{z}_{y_1}$ respectively.
The parameters of the differentiator $\lambda_0=1, \alpha_0=1.1$
are suggested in \cite{levent1998robust}.
$L_0$ is the only parameter needs to be tuned
according to the condition $|\ddot{z}_{y_1}|\leq L_0$.
\end{remark}
It follows from (\ref{eqn:1}) and (\ref{eqn:two output system observer}, \ref{eqn:adaptive laws})
that the error dynamical equation is
\begin{eqnarray}
\dot{e}_2&=&{A}_{22}e_2+\tilde{g}_2(y,{z}_2,\hat{z}_2,u)
+{\phi}_2(y,u)\tilde{\theta},
\label{error1}\\
\dot{\tilde{\theta}}&=&-K(y,u)\left(
\bar{A}_{21}e_2+\tilde{W}_{g_1}(y,{z}_2,\hat{z}_2,u)+
\Phi_1(y,u)\tilde{\theta}\right),
\label{error2}\\
\dot{e}_{y_1}&=&-\mu(e_{y_1})+\bar{A}_{21}e_2+
{\Phi}_1(y,u)\tilde{\theta}+\tilde{W}_{g_1}(y,{z}_2,\hat{z}_2,u),
\label{error3}\\
\dot{e}_{y_2}&=&-\mu(e_{y_2})+\bar{A}_{22}e_2+{\Phi}_2(y,u)f(t)
+\tilde{W}_{g_2}(y,{z}_2,\hat{z}_2,u),
\label{error4}
\end{eqnarray}
where $\tilde{g}_2(y,{z}_2,\hat{z}_2,u)={g}_2(y,{z}_2,u)-{g}_2(y,\hat{z}_2,u),
\tilde{W}_{g_1}(y,{z}_2,\hat{z}_2,u)=W_{g_1}(y,z_2,u)-W_{g_1}(y,\hat{z}_2,u)$ and \\
$\tilde{W}_{g_2}(y,{z}_2,\hat{z}_2,u)={W}_{g_2}(y,{z}_2,u)-{W}_{g_2}(y,\hat{z}_2,u)$. Some assumptions are imposed upon the system:
\begin{assump}\label{3.2}
The known nonlinear terms $g_2(y,{z}_2,u)$, $W_{g_1}(y,z_2,u)$
and $W_{g_2}(y,z_2,u)$ are Lipschitz continuous with respect to $z_2$   i.e.
\begin{equation}\label{eqn:new Lipschitz condition}
\begin{split}
&\|{g}_2(y,{z}_2,u)-{g}_2(y,\hat{z}_2,u)\|\leq {\gamma}_2\|{z}_2-\hat{z}_2\|, \\
&\|{W}_{g_1}(y,{z}_2,u)-{W}_{g_1}(y,\hat{z}_2,u)\|\leq {\gamma}_{g_1}\|{z}_2-\hat{z}_2\|,\\
&\|{W}_{g_2}(y,{z}_2,u)-{W}_{g_2}(y,\hat{z}_2,u)\|\leq {\gamma}_{g_2}\|{z}_2-\hat{z}_2\|.
\end{split}
\end{equation}
where $\gamma_{g_1}$, $\gamma_{g_2}$ and $\gamma_2$ are the known Lipschitez constants for
${W}_{g_1}(y,{z}_2,u)$, $W_{g_2}(y,{z}_2,u)$ and ${g}_2(y,{z}_2,u)$
respectively \cite{zhang2010fault}.
\end{assump}
\begin{assump}\label{3.3}
Suppose that the Huritwz matrix $A_{22}$ satisfies the following Riccati equation
\begin{equation}\label{Riccati equation}
    {A}^T_{22}P_1+P_1{A}_{22}+\gamma^2_2P_1P_1+2I_{n-p}+\varepsilon I_{n-p}=0,
\end{equation}
which has a symmetric positive-definite solution $P_1$
for some $\varepsilon>0$ \cite{rajamani1998observers}.
\end{assump}
\begin{assump}\label{3.4}
Suppose that the matrix design parameter $K(y,u)$ satisfies the following equation
\begin{equation}\label{matrix design parameter}
K(y,u){\Phi}_1(y,u)+{\Phi}^T_1(y,u)K^T(y,u)-\gamma^2_{g_1}K(y,u)K^T(y,u)-\epsilon I_{q}=0,
\end{equation}
for some $\epsilon>0$.
\end{assump}
Now, We will first consider the stability of the error systems (\ref{error1}) and (\ref{error2}).
\begin{thm}\label{thm1}
Consider the systems (\ref{error1}, \ref{error2}) satisfying the Assumptions
(\ref{3.2}, \ref{3.3} and \ref{3.4}).
Then, systems (\ref{error1}) and (\ref{error2}) are exponentially stable if the matrix
\begin{equation}\label{positive matrix Q}
Q=
\begin{bmatrix}
\varepsilon I_{n-p}&P_1{\phi}_2(y,u)-\bar{A}^T_{21}K^T(y,u) \\
{\phi}^T_2(y,u)P_1-K(y,u)\bar{A}_{21}&\epsilon I_{q}
\end{bmatrix}
\end{equation}
is positive definite, where $P_1$ and $K(y,u)$ satisfies (\ref{Riccati equation}) and
(\ref{matrix design parameter}) respectively.
\end{thm}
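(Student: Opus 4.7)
The natural plan is a Lyapunov argument with the composite quadratic
\begin{equation*}
V(e_2,\tilde{\theta})=e_2^{T}P_1 e_2+\tilde{\theta}^{T}\tilde{\theta},
\end{equation*}
which reflects the two blocks that we need to couple: the $z_2$--subsystem is shaped by $P_1$ (the solution of the Riccati equation in Assumption~\ref{3.3}), and the parameter-error dynamics is symmetric by construction in the design equation (\ref{matrix design parameter}). Differentiating $V$ along (\ref{error1})--(\ref{error2}) produces four groups of terms: the $A_{22}$--quadratic $e_2^{T}(A_{22}^{T}P_1+P_1 A_{22})e_2$, the $\Phi_1$--quadratic $-2\tilde{\theta}^{T}K\Phi_1\tilde{\theta}$, the Lipschitz perturbations involving $\tilde{g}_2$ and $\tilde{W}_{g_1}$, and the cross-coupling $2e_2^{T}P_1\phi_2\tilde{\theta}-2\tilde{\theta}^{T}K\bar{A}_{21}e_2$.

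The first step is to dispose of the Lipschitz perturbations via Young's inequality calibrated to the Riccati and design weights:
\begin{equation*}
2e_2^{T}P_1\tilde{g}_2\le \gamma_2^{2}e_2^{T}P_1 P_1 e_2+\tfrac{1}{\gamma_2^{2}}\|\tilde{g}_2\|^{2}\le \gamma_2^{2}e_2^{T}P_1 P_1 e_2+\|e_2\|^{2},
\end{equation*}
and identically
\begin{equation*}
-2\tilde{\theta}^{T}K\tilde{W}_{g_1}\le \gamma_{g_1}^{2}\tilde{\theta}^{T}KK^{T}\tilde{\theta}+\|e_2\|^{2},
\end{equation*}
using Assumption~\ref{3.2}. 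Inserting the Riccati identity in Assumption~\ref{3.3} collapses $e_2^{T}(A_{22}^{T}P_1+P_1 A_{22})e_2+\gamma_2^{2}e_2^{T}P_1 P_1 e_2$ into $-(2+\varepsilon)\|e_2\|^{2}$, and the design identity in Assumption~\ref{3.4} collapses $-2\tilde{\theta}^{T}K\Phi_1\tilde{\theta}+\gamma_{g_1}^{2}\tilde{\theta}^{T}KK^{T}\tilde{\theta}$ into $-\epsilon\|\tilde{\theta}\|^{2}$. After cancelling the two surplus $\|e_2\|^{2}$ contributions against the $-2\|e_2\|^{2}$ produced by the Riccati, what survives is precisely
\begin{equation*}
\dot V\le -\varepsilon\|e_2\|^{2}-\epsilon\|\tilde{\theta}\|^{2}+2e_2^{T}\bigl(P_1\phi_2-\bar{A}_{21}^{T}K^{T}\bigr)\tilde{\theta}.
\end{equation*}

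The second step is to recognise the right-hand side as $-\xi^{T}\tilde Q\xi$ with $\xi=[e_2^{T},\tilde{\theta}^{T}]^{T}$ and $\tilde Q$ equal to $Q$ up to a sign flip of its off-diagonal block; congruence by $\mathrm{diag}(I,-I)$ shows $\tilde Q\succ 0\iff Q\succ 0$, so the hypothesis gives $\dot V\le -\lambda_{\min}(Q)\,\|\xi\|^{2}$. Combined with $V\le \max\{\lambda_{\max}(P_1),1\}\,\|\xi\|^{2}$, this yields $\dot V\le -\kappa V$ for an explicit $\kappa>0$, hence exponential decay of both $e_2$ and $\tilde{\theta}$.

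I expect the only subtlety to lie in calibrating the Young-inequality weights so that the two surplus squared-norm terms cancel cleanly against the $-2I_{n-p}$ in the Riccati and so that the $KK^{T}$ terms cancel against the $\gamma_{g_1}^{2}KK^{T}$ in the design equation; picking any other weights would leave residuals that spoil the neat quadratic form in $Q$. The sign-convention check between the $Q$ printed in (\ref{positive matrix Q}) and the $\tilde Q$ produced by differentiation is a minor bookkeeping point rather than a real obstacle.
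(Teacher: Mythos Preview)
Your proposal is correct and follows essentially the same route as the paper: the same Lyapunov function $V=e_2^{T}P_1e_2+\tilde{\theta}^{T}\tilde{\theta}$, the same Young-inequality absorption of the Lipschitz terms into the Riccati and design identities, and the same reduction to a quadratic form in $[e_2^{T},\tilde{\theta}^{T}]^{T}$. In fact you are slightly more careful than the paper on one bookkeeping point: the quadratic form that actually appears after the bounding has the off-diagonal block $-(P_1\phi_2-\bar A_{21}^{T}K^{T})$, not $+(P_1\phi_2-\bar A_{21}^{T}K^{T})$ as in the printed $Q$, and your congruence argument via $\mathrm{diag}(I,-I)$ is the correct way to reconcile this.
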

\begin{proof}
A candidate Lyapunov function is chosen as
\begin{equation}\label{eqn:lyapunov_3}
    V_1(e_2,\tilde{\theta})=e^T_2P_1e_2+\tilde{\theta}^T\tilde{\theta},
\end{equation}
and the time derivative of $V_1$ along the solution of the system (\ref{error1}, \ref{error2}) is given by
\begin{equation}\label{eqn:dlyapunov_3}
\begin{split}
\dot{V}_1=&e^T_2({A}^T_{22}P_1+P_1{A}_{22})e_2
+2e^T_2P_1\tilde{g}_2(y,{z}_2,\hat{z}_2,u)+2e^T_2P_1{\phi}_2(y,u)\tilde{\theta}
-2e^T_2\bar{A}^T_{21}K^T(y,u)\tilde{\theta}\\
&-2\tilde{\theta}^TK(y,u)\tilde{W}_{g_1}(y,{z}_2,\hat{z}_2,u)
-\tilde{\theta}^T\left(K(y,u){\Phi}_1(y,u)
+{\Phi}^T_1(y,u)K^T(y,u)\right)\tilde{\theta}\\
\leq&e^T_2({A}^T_{22}P_1+P_1{A}_{22}+\gamma^2_2P_1P_1+2I_{n-p})e_2+
2e^T_2\left(P_1{\phi}_2(y,u)-\bar{A}^T_{21}K^T(y,u)\right)\tilde{\theta}\\
&-\tilde{\theta}^T\left(K(y,u){\Phi}_1(y,u)
+{\Phi}^T_1(y,u)K^T(y,u)-\gamma^2_{g_1}K(y,u)K^T(y,u)\right)\tilde{\theta}\\
=&-
\begin{bmatrix}
e^T_2&\tilde{\theta}^T
\end{bmatrix}
Q
\begin{bmatrix}
e_2\\
\tilde{\theta}
\end{bmatrix},
\end{split}
\end{equation}
Hence, It follows that $e_2, \tilde{\theta}$ will converge to zero exponentially.
\end{proof}

Before proving the convergence of the systems (\ref{error3}) and (\ref{error4}),
we will show some conditions.
Theorem \ref{thm1} shows that
$\lim\limits_{t\rightarrow \infty}e_2(t)=0$ and
$\lim\limits_{t\rightarrow \infty}\tilde{\theta}(t)=0$.
Consequently, the errors $e_2, \tilde{\theta}$ and its derivatives
$\dot{e}_2$, $\dot{\tilde{\theta}}$ are bounded.
\begin{assump}\label{3.6}
The time derivative of functions $\Phi_1(y,u)$, $\Phi_2(y,u)$,
$W_{g_1}(y,z,u)$ and $W_{g_2}(y,z,u)$ are bounded.
\end{assump}
Therefore, under the Assumption \ref{3.6},
the time derivative of the nonlinear terms in the error dynamics
(\ref{error3}, \ref{error4}) are bounded.
That is,
\begin{equation}\label{ineqn:bounded error}
\begin{split}
    &\left\|\frac{d}{dt}\left(\bar{A}_{22}e_2+{\Phi}_2(y,u)f(t)
    +\tilde{W}_{g_2}(y,{z}_2,\hat{z}_2,u)\right)\right\|\leq \chi_1,\\
&\left\|\frac{d}{dt}\left(
\bar{A}_{21}e_2+
{\Phi}_1(y,u)\tilde{\theta}+\tilde{W}_{g_1}(y,{z}_2,\hat{z}_2,u)
\right)\right\|\leq \chi_2.
\end{split}
\end{equation}
where $\chi_1$ and $\chi_2$ are
some \textbf{\textit{unknown}} positive constants.

In what follows, the objective is to prove the finite time convergence of
the systems (\ref{error3}) and (\ref{error4}) based on adaptive-gain
STW algorithms \cite{Shtessel2012759,alwi2011oscillatory}.
\begin{thm}\label{thm2}
Consider the system (\ref{error3}, \ref{error4})
with the following adaptation laws,
\begin{equation}\label{eqn:adaptive gains}
\left\{
\begin{split}
\lambda(t)&=2\sqrt{L(t)},\\
\alpha(t)&=4L(t),
\end{split}
\right.
\end{equation}
where the dynamic of the positive time varying function $L(t)$ is given by
\begin{equation}\label{eqn:adaptive gains rt}
\dot{L}(t) =
\begin{cases}
  k, \quad\quad if \quad |e_{y_i}|\neq0\\
  0, \quad\quad else
\end{cases}
\end{equation}
where $k>0$ is a positive design constant. Then, all trajectories of the system (\ref{error3}, \ref{error4})
converge to zero in finite time.
\end{thm}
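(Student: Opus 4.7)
The plan is to treat (\ref{error3}) and (\ref{error4}) as two copies of a perturbed super-twisting system differing only in the driving signal. Define $\eta_1(t):=\bar{A}_{21}e_2+\Phi_1(y,u)\tilde{\theta}+\tilde{W}_{g_1}$ and $\eta_2(t):=\bar{A}_{22}e_2+\Phi_2(y,u)f(t)+\tilde{W}_{g_2}$; by (\ref{ineqn:bounded error}) each has a bounded derivative, $\|\dot{\eta}_i\|\le\chi_i$, though the bounds $\chi_i$ are \emph{unknown}. Introducing $\psi_i:=\eta_i-\varphi$, any scalar component of the error pair then obeys the canonical perturbed STW form
\begin{equation*}
\dot{e}_{y_i}=-\lambda(t)|e_{y_i}|^{1/2}\mathrm{sign}(e_{y_i})+\psi_i,\qquad \dot{\psi}_i=-\alpha(t)\mathrm{sign}(e_{y_i})+\dot{\eta}_i.
\end{equation*}

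My first step is the Moreno--Osorio change of coordinates $\zeta_i=[\,|e_{y_i}|^{1/2}\mathrm{sign}(e_{y_i}),\ \psi_i\,]^T$. Because $\lambda=2\sqrt{L}$ and $\alpha=4L$ enforce the homogeneity relation $\alpha=\lambda^2$, the transformed dynamics take the form $\dot{\zeta}_i=\tfrac{1}{2|e_{y_i}|^{1/2}}A(L)\zeta_i+B\dot{\eta}_i$ with $A(L)$ Hurwitz for every $L>0$. I then propose the composite Lyapunov candidate
\begin{equation*}
V_i(\zeta_i,L)=\zeta_i^T P\zeta_i+\tfrac{1}{2\gamma_i}\bigl(\sqrt{L(t)}-\sqrt{L_i^\ast}\bigr)^2,
\end{equation*}
where $P=P^T>0$ solves $PA(L_i^\ast)+A^T(L_i^\ast)P=-Q_i$ for some $Q_i>0$, and $L_i^\ast$ is chosen (in terms of the unknown $\chi_i$) large enough to dominate the cross term generated by $B\dot{\eta}_i$.

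A direct computation of $\dot{V}_i$ along (\ref{error3}), (\ref{error4}) and (\ref{eqn:adaptive gains rt}) splits into a negative-definite quadratic in $\zeta_i$ of order $\|\zeta_i\|^2/|e_{y_i}|^{1/2}$, a perturbation term of order $\chi_i\|\zeta_i\|$, and a gain-coupling term of the form $\tfrac{k}{2\gamma_i\sqrt{L}}(\sqrt{L}-\sqrt{L_i^\ast})$. For $L\ge L_i^\ast$ and suitably chosen $\gamma_i$, these combine into the homogeneous estimate $\dot{V}_i\le -\beta_i V_i^{1/2}$, from which the classical finite-time comparison lemma delivers $\zeta_i\equiv 0$ (hence $e_{y_i}=0$ and $\varphi=\eta_i$, the equivalent output error injection) after some finite time $T_i$.

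The chief obstacle is that the bound $\chi_i$ is unknown, so $L_i^\ast$ cannot be fixed a priori. The resolution exploits the monotone adaptation $\dot{L}=k>0$ while $|e_{y_i}|\neq 0$: irrespective of its value, $L(t)$ crosses any finite threshold $L_i^\ast$ within $L_i^\ast/k$ time units. Over the preceding interval finite escape is ruled out, since $e_2$ and $\tilde{\theta}$ are uniformly bounded by Theorem \ref{thm1} and the nonlinearities are Lipschitz (Assumption \ref{3.2}), so $e_{y_i}$ stays bounded. Once $L$ crosses $L_i^\ast$ the Lyapunov estimate takes over and drives both $e_{y_1}$ and $e_{y_2}$ (and their associated $\varphi$ variables) to zero in finite time; after that instant $\dot{L}=0$ freezes $L$ at a finite value, completing the argument by taking the maximum of the two channel convergence times.
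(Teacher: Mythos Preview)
The paper does not actually prove Theorem~\ref{thm2}; it states only that the proof is ``similar to \cite{alwi2011oscillatory}'' and then, in the remark that follows, sketches a homogeneity/time-scaling argument showing that the perturbation is attenuated by $1/L$. Your two-phase strategy---let $L(t)$ grow linearly until it crosses an (unknown) threshold $L_i^\ast$, rule out finite escape in the meantime via Theorem~\ref{thm1} and the Lipschitz bounds, and then invoke a standard super-twisting Lyapunov estimate---is precisely the mechanism behind such adaptive-gain results and is consistent with what the paper indicates.

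There is, however, an inconsistency between the Lyapunov function you propose and the argument you actually carry out. The composite candidate $V_i=\zeta_i^TP\zeta_i+\tfrac{1}{2\gamma_i}(\sqrt{L}-\sqrt{L_i^\ast})^2$ cannot satisfy $\dot V_i\le-\beta_i V_i^{1/2}$ once $L>L_i^\ast$: at the instant the sliding surface $e_{y_i}=0$ is first reached one has $\zeta_i=0$ but $\sqrt{L}-\sqrt{L_i^\ast}>0$, so $V_i>0$ while $\dot V_i=0$ (because $\dot L=0$ there), contradicting the strict inequality. More generally, for $L>L_i^\ast$ with $e_{y_i}\neq0$ the gain-deviation term contributes $\tfrac{k}{2\gamma_i\sqrt{L}}(\sqrt{L}-\sqrt{L_i^\ast})>0$ to $\dot V_i$, and no fixed $\gamma_i$ lets the state part dominate this uniformly as $L$ grows. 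The gain-deviation term is useful only while $L<L_i^\ast$, where its derivative is negative and can absorb the $\chi_i$-perturbation; for the second phase you should drop it and work with the state-only function $W_i=\zeta_i^TP\zeta_i$, for which the Moreno--Osorio estimate $\dot W_i\le-\beta_i W_i^{1/2}$ does hold because the gains now dominate $\chi_i$. Your final ``resolution'' paragraph already describes this split correctly---simply align the Lyapunov machinery with it.
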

The proof of this theorem is similar to \cite{alwi2011oscillatory}.

\begin{remark}\label{time scale}
The advantage of the law (\ref{eqn:adaptive gains}) is that the uncertainty affecting the system can be attenuated by $\frac{1}{L}$, where $L$ is a positive constant.
Consider the structure of the system (\ref{error3}) and (\ref{error4}),
\begin{equation}\label{eqn:original sys}
\left\{
\begin{split}
\frac{de}{dt}=&-\lambda_0 |e|^{\frac{1}{2}}sign(e) + \varphi,\\
\frac{d\varphi}{dt}=&-\alpha_0 sign(e)+\dot{\nu},
\end{split}
\right.
\end{equation}
where $\nu$ denotes the uncertainty which affects the system (\ref{error3}) and (\ref{error4}).

Then according to \eqref{eqn:adaptive gains}, the time-scale is modified as
\begin{equation}\label{eqn:time scaling1}
  d\tau=Ldt,\quad \tilde{e}=Le.
\end{equation}
then, for every fixed value of $L$, the system (\ref{eqn:original sys}) is equivalent to the following system:
\begin{equation}\label{eqn:coordinate transform}
\left\{
\begin{split}
\frac{d\tilde{e}}{d\tau}=&-\lambda_0 |\tilde{e}|^{\frac{1}{2}}sign(\tilde{e}) + \varphi,\\
\frac{d\varphi}{d\tau}=&-\alpha_0 sign(\tilde{e})+\frac{\dot{\nu}}{L},
\end{split}
\right.
\end{equation}
Now, it follows from a comparison between the systems (\ref{eqn:original sys}) and (\ref{eqn:coordinate transform}), that the uncertainty effects on the system (\ref{eqn:coordinate transform}) will be attenuated by $\frac{1}{L}$ for every fixed positive value of $L$ with respect to the new time-scale $\tau$, without changing the system structure.
In our paper, $\lambda_0=2$ and $\alpha_0=4$ are chosen.
\end{remark}
Theorems \ref{thm1} and \ref{thm2} have shown that systems (\ref{eqn:two output system observer})
and (\ref{eqn:adaptive laws}) are an asymptotic
state observer and uncertain parameter observer for the system (\ref{eqn:1}) respectively.
In the next Section, we will develop the fault reconstruction approach
based on those two observers.
\section{Fault Reconstruction}
The fault signal $f(t)$ will be reconstructed based on the proposed observer
by using an equivalent output error injection which can be obtained
as soon as the sliding surface is reached.

It follows from Theorem \ref{thm2} that $e_{y_1},e_{y_2},\dot{e}_{y_1},\dot{e}_{y_2}$
in (\ref{error3}) and (\ref{error4})
are driven to zero in finite time.
Then, the equivalent output error injection are obtained directly
\begin{equation}\label{eqn:equivalent injections}
\mu(e_{y_2})=\bar{A}_{22}e_2+{\Phi}_2(y,u)f(t)
+\tilde{W}_{g_2}(y,{z}_2,\hat{z}_2,u).
\end{equation}
From Assumption \ref{assump:assump4},
${\Phi}_2(y,u)$ is a nonsingular matrix,
then the estimation of $f(t)$ can be constructed as
\begin{equation}\label{eqn:two estimation of f}
    \hat{f}(t)={\Phi}^{-1}_2(y,u)\mu(e_{y_2}).
\end{equation}
\begin{thm}\label{theorem:6}
If the conditions of Theorems (\ref{thm1}, \ref{thm2}) are satisfied,
then $\hat{f}(t)$ defined in (\ref{eqn:two estimation of f}) is a reconstruction
of the fault $f(t)$ since
\begin{equation}\label{eqn:theorem 6 fault error}
    \lim\limits_{t \to \infty }{\left\Vert f(t)-\hat{f}(t)\right\Vert}=0.
\end{equation}
\end{thm}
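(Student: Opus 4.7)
The plan is to combine the finite-time convergence from Theorem \ref{thm2} with the exponential convergence from Theorem \ref{thm1}, and read off the reconstruction error from the error dynamics in sliding mode.

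First, I would invoke Theorem \ref{thm2} to guarantee that there exists a finite time $T^\ast>0$ such that for all $t\ge T^\ast$ one has $e_{y_2}(t)=0$ and $\dot e_{y_2}(t)=0$; this is the ideal second-order sliding motion on the surface $e_{y_2}=0$. Substituting these two identities into the error equation (\ref{error4}) yields the equivalent output error injection
\begin{equation*}
\mu(e_{y_2})\big|_{\mathrm{eq}}=\bar{A}_{22}e_2+\Phi_2(y,u)f(t)+\tilde{W}_{g_2}(y,z_2,\hat{z}_2,u),\qquad t\ge T^\ast,
\end{equation*}
where the left-hand side is realized through the continuous integral component $\varphi(e_{y_2})$ of the super-twisting law (\ref{eqn:two stw law}).

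Next, I would substitute this identity into the definition (\ref{eqn:two estimation of f}) of $\hat f(t)$. Since Assumption \ref{assump:assump4} ensures $\Phi_2(y,u)$ is nonsingular, pre-multiplying by $\Phi_2^{-1}(y,u)$ yields, for $t\ge T^\ast$,
\begin{equation*}
\hat{f}(t)-f(t)=\Phi_2^{-1}(y,u)\bigl[\bar{A}_{22}e_2(t)+\tilde{W}_{g_2}(y,z_2,\hat{z}_2,u)\bigr].
\end{equation*}
Thus the reconstruction error is expressed purely in terms of the $z_2$-observer error $e_2$ and the Lipschitz mismatch of $W_{g_2}$.

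Finally, I would control each term on the right. By Theorem \ref{thm1}, $e_2(t)\to 0$ exponentially, and by the Lipschitz bound (\ref{eqn:new Lipschitz condition}) in Assumption \ref{3.2}, $\|\tilde{W}_{g_2}(y,z_2,\hat z_2,u)\|\le \gamma_{g_2}\|e_2(t)\|\to 0$. Since $\Phi_2(y,u)$ is a smooth, bounded, nonsingular matrix (so $\Phi_2^{-1}(y,u)$ is bounded along trajectories with bounded $y,u$), the right-hand side converges to zero, yielding (\ref{eqn:theorem 6 fault error}).

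The only delicate point is the handling of the equivalent injection $\mu(e_{y_2})|_{\mathrm{eq}}$: although $\lambda(t)|e_{y_2}|^{1/2}\mathrm{sign}(e_{y_2})$ vanishes on the sliding surface, the integral term $\varphi(e_{y_2})$ does not, and it is precisely this continuous component that provides an on-line, chattering-free reconstruction — so no low-pass filtering of a discontinuous signal is required, which is what makes the equality $\mu(e_{y_2})|_{\mathrm{eq}}=\bar A_{22}e_2+\Phi_2 f+\tilde W_{g_2}$ directly usable in (\ref{eqn:two estimation of f}). I do not anticipate any serious obstacle beyond making this observation rigorous; everything else is a direct consequence of Theorems \ref{thm1} and \ref{thm2}.
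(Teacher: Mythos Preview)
Your proposal is correct and follows essentially the same argument as the paper: obtain the equivalent injection relation from $e_{y_2}=\dot e_{y_2}=0$ via (\ref{error4}), substitute into (\ref{eqn:two estimation of f}), and bound the resulting error by $\|\Phi_2^{-1}(y,u)\bar A_{22}\|\|e_2\|+\gamma_{g_2}\|\Phi_2^{-1}(y,u)\|\|e_2\|$, which vanishes by Theorem~\ref{thm1}. Your additional remarks about the continuous integral term $\varphi(e_{y_2})$ supplying the equivalent injection and about the boundedness of $\Phi_2^{-1}$ are more explicit than the paper's own proof but add no new ingredient.
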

\begin{proof}
It follows from (\ref{eqn:equivalent injections}) and
(\ref{eqn:two estimation of f}) that
\begin{equation}\label{eqn: fault estimation error}
\begin{split}
&\left\Vert f(t)-\hat{f}(t)\right\Vert=
\left\Vert
{\Phi}^{-1}_2(y,u)\left(\bar{A}_{22}e_2+\tilde{W}_{g_2}\right)\right\Vert\\
&\leq
\left\Vert
{\Phi}^{-1}_2(y,u)\bar{A}_{22}
\right\Vert
\left\Vert e_2
\right\Vert
+\gamma_{g_2}\left\Vert
{\Phi}^{-1}_2(y,u)\right\Vert
\left\Vert e_2 \right\Vert.
\end{split}
\end{equation}
It follows from Theorem \ref{thm1} that
\begin{equation}\label{eqn:theorem 6 fault error1}
    \lim\limits_{t \to \infty }{\left\Vert f(t)-\hat{f}(t)\right\Vert}=0.
\end{equation}
Hence, Theorem \ref{theorem:6} is proven.
\end{proof}
\section{Application to PEM Fuel Cell systems}
In this section, we demonstrate the observer design process using the reduced-order model of a fuel cell system which has been verified in \cite{talj2010experimental} experimentally.
The dynamic model of the fuel cell system is given as,
\begin{equation}\label{eqn:fuel cell model}
\left\{
\begin{split}
\dot{x}=&F(x)+g_uu+g_\xi\xi+g_ff,\\
y = &h(x),
\end{split}
\right.
\end{equation}
where the vector
$F(x)=
\begin{bmatrix}
f_1(x)&f_2(x)&f_3(x)&f_4(x)
\end{bmatrix}^T$
is
\begin{equation}\label{eqn:vector f}
\left\{
\begin{split}
f_1(x)=&-(c_1+c_8)(x_1-x_4)-\frac{c_3(x_1-c_2)\psi(x_1)}{\kappa x_1},\\
f_2(x)=&-c_8(x_1-x_4)-\frac{c_3x_2\psi(x_1)}{\kappa x_1},\\
f_3(x)=&-c_9x_3-\frac{c_{10}}{x_3}\left[(\frac{x_4}{c_{11}})^{c_{12}}-1\right]h_3(x_3),\\
f_4(x)=&c_{14}\left\{1+c_{15}\left[(\frac{x_4}{c_{11}})^{c_{12}}-1\right]\right\}
\times\left[h_3(x_3)+c_{16}(x_1-x_4)\right],
\end{split}
\right.
\end{equation}
and the input vectors are $g_u=
\begin{bmatrix}
0&0&c_{13}&0
\end{bmatrix}^T$, $g_\xi=
\begin{bmatrix}
-c_4&0&0&0
\end{bmatrix}^T$,
$g_f=
\begin{bmatrix}
0&0&0&c_5
\end{bmatrix}^T$.
The system states  $x=[x_1,x_2,x_3,x_4]^T$,
where $x_1$ and $x_2$ are the total pressure at the cathode and
nitrogen partial pressures respectively,
$x_3$ is the angular speed of the compressure, and
$x_4$ is the air pressure in the supply manifold;
The measurements of the system are
$y=h(x)=
\begin{bmatrix}
y_1&y_2
\end{bmatrix}
^T=
\begin{bmatrix}
x_1&x_4
\end{bmatrix}
^T$.
The stack current $\xi$ is considered as an uncertain parameter $\theta$ in our observer design, which eliminates the need of adding current sensors in the system. The control input $u$ represents the motor's quadratic current component is controlled
using a real time static feed-forward controller.

A failure of the fuel cell air circuit is considered, i.e. the pipe connecting the air-feed compressor to the fuel cell cathode explodes suddenly. This can be detected as a variation in supply manifold pressure dynamics \cite{escobet2009model}.
The fault signal $f(t)$ appears in the output channel $z_{y_2}$,
\begin{eqnarray}\label{fault signal f}
f(t) =
\begin{cases}
  3\times 10^{-3} \ \ kg/sec, \ \ if\ \ t\geq 50\ \ sec\\
  0, \quad\quad\quad\quad\quad\quad\quad\ \ else
\end{cases}
\end{eqnarray}
The function $\psi(x_1)$ is the total flow rate at the cathode exit,
\begin{eqnarray}\label{eqn:total flow rate}
\psi(x_1) =
\begin{cases}
  c_{17}x_1\left(\frac{c_{11}}{x_1}\right)^{c_{18}}\sqrt{1-\left(\frac{c_{11}}{x_1}\right)^{c_{12}}},
  \quad if \ \ \frac{c_{11}}{x_1}> c_{19}\\
  c_{20}x_1, \quad \quad\quad \quad \quad \quad \quad \quad\quad\ \ \ \ \ \  if \ \
  \frac{c_{11}}{x_1}\leq c_{19}
\end{cases}
\end{eqnarray}
and $h(x_3)$ is the mass flow rate of a twin screw compressor, where $h(x_3)=Ax_3$.
All the parameters $c_i$, $k_f$ are positive and depend on the physical values
of the fuel cell \cite{pukrushpan2004}(See Appendix A).

In order to design the proposed observer for the fuel cell system.
Let define $z_{y_1}:=x_1$, $z_{y_2}:=x_4$, $z_2:=\begin{bmatrix}x_2&x_3\end{bmatrix}^T$ and $\theta:=\xi$.
Then, system (\ref{eqn:fuel cell model}) is described as the form of (\ref{eqn:1})
\begin{equation}\label{sys:fc1}
\left\{
\begin{split}
\dot{z}_{y_1}=&-(c_1+c_8)(z_{y_1}-z_{y_2})-\frac{c_3(z_{y_1}-c_2)\psi(z_{y_1})}{\kappa z_{y_1}}
-c_4\theta,\\
\dot{z}_{y_2}=&c_{14}\left\{1+c_{15}\left[(\frac{z_{y_2}}{c_{11}})^{c_{12}}-1\right]\right\}
\times\left[h_3(D_2z_2)+c_{16}(z_{y_1}-z_{y_2})-f(t)\right],\\
\dot{z}_2=&
\underbrace{\begin{bmatrix}
-H&0\\
0&-c_9
\end{bmatrix}}_{A_{22}}
z_2+
\underbrace{\begin{bmatrix}
-c_8&c_8\\
0&0
\end{bmatrix}}_{A_{21}}
y+
\underbrace{\begin{bmatrix}
D_1z_2\left(H-\frac{c_3\psi(z_{y_1})}{\kappa z_{y_1}}\right)\\
-\frac{c_{10}}{D_2z_2}\left[\left(\frac{z_{y_2}}{c_{11}}\right)^{c_{12}}
-1\right]h_3(D_2z_2)+c_{13}u
\end{bmatrix}}_{g_2(y,z_2,u)},\\
y=&
\begin{bmatrix}
{z}_{y_1}\\
{z}_{y_2}
\end{bmatrix}.
\end{split}
\right.
\end{equation}
where
$D_1=\begin{bmatrix}1&0\end{bmatrix},
D_2=\begin{bmatrix}0&1\end{bmatrix},
W_{g_1}(y,z_2,u):=-\frac{c_3(z_{y_1}-c_2)\psi(z_{y_1})}{\kappa z_{y_1}},\\
W_{g_2}(y,z_2,u):=c_{14}c_{15}\left(\frac{z_{y_2}}{c_{11}}\right)^{c_{12}}
\left(h_3(D_2z_2)+c_{16}(z_{y_1}-z_{y_2})\right),
\Phi_1(y,u):=-c_4$, $\Phi_2(y,u):=c_5,
\phi_2(y,u)=0$
and the design gain parameter $H$ is chosen to
satisfy the Riccati equation (\ref{Riccati equation}). The fault signal is weighted by $c_5$, modeled as
\begin{equation}
c_5=-c_{14}\left[1+c_{15}\left(\left(\frac{z_{y_2}}{c_{11}}\right)^{c_{12}}-1\right)\right],
\end{equation}

The adaptive-gain STW observer for the system (\ref{sys:fc1})
is designed as the form (\ref{eqn:two output system observer}, \ref{eqn:adaptive laws})
\begin{equation}\label{sys:fc1obs}
\left\{
\begin{split}
\dot{\hat{z}}_{y_1}=&-(c_1+c_8)(z_{y_1}-z_{y_2})
-\frac{c_3(z_{y_1}-c_2)\psi(z_{y_1})}{\kappa z_{y_1}}
-c_4\hat{\theta}+\mu(e_{y_1}),\\
\dot{\hat{z}}_{y_2}=&c_{14}\left\{1+c_{15}
\left[(\frac{z_{y_2}}{c_{11}})^{c_{12}}-1\right]\right\}
\times\left[h_3(D_2\hat{z}_2)+c_{16}(z_{y_1}-z_{y_2})\right]+\mu(e_{y_2}),\\
\dot{\hat{z}}_2=&
\begin{bmatrix}
-c_8(z_{y_1}-z_{y_2})-\frac{c_3D_1\hat{z}_2\psi(z_{y_1})}{\kappa z_{y_1}}\\
-c_9D_2\hat{z}_2-\frac{c_{10}}{D_2z_2}\left[(\frac{z_{y_2}}{c_{11}})^{c_{12}}
-1\right]h_3(D_2\hat{z}_2)+c_{13}u
\end{bmatrix},\\
\hat{y}=&
\begin{bmatrix}
{\hat{z}}_{y_1}\\
{\hat{z}}_{y_2}
\end{bmatrix}.
\end{split}
\right.
\end{equation}
and
\begin{equation}\label{eqn:new adaptive parameter}
\begin{split}
\dot{\hat{\theta}}=&-K\left(
c_4\hat{\theta}+(c_1+c_8)(z_{y_1}-z_{y_2})
+\frac{c_3(z_{y_1}-c_2)\psi(z_{y_1})}{\kappa z_{y_1}}
+\dot{z}_{y_1}
\right).
\end{split}
\end{equation}
the adaptive-gains of the STW algorithm
$\mu(e_{y_1}), \mu(e_{y_2})$ are designed
according to (\ref{eqn:adaptive gains}, \ref{eqn:adaptive gains rt}).
The value of $\dot{z}_{y_1}$ is obtained from the robust exact finite time
differentiator (\ref{differentiator}) in \cite{levent1998robust}.
From (\ref{eqn:two estimation of f}), the fault signal $f(t)$ is estimated as $\hat{f}=\frac{\mu(e_{y_2})}{c_5}$.
\begin{remark}
The Assumptions (\ref{3.2},\ref{3.6}) are satisfied by the functions
$W_{g_1}(y,z_2,u)$, $W_{g_2}(y,z_2,u)$, $\Phi_1(y,u)$, $\Phi_2(y,u)$
and $g_2(y,z_2,u)$.
The Riccati equation in the Assumption \ref{3.3} will be satisfied by appropriate
value of the design gain $H>0$.
The Assumption \ref{3.4} is also satisfied for some $\epsilon>0$,
since the equation (\ref{matrix design parameter}) is simplified into a scalar equation.
\end{remark}
\section{Experimental Results}
Experiments have been performed on a Hardware-In-Loop (HIL) test bench shown in Fig. 1, which consists of a real time emulated fuel cell system and a twin screw compressor.
This emulated PEMFC stack is a 33 kW fuel cell composed of 90 cells in series,
which provides the cathode pressure and supply manifold pressure as outputs.
The twin screw compressor
consists of two helical rotors which are coupled directly to its motor.
Air intake is at the opposite side of the mechanical transmission and
the output pressure is regulated by a servo valve.
It has a flow rate margin 0-0.1 kg/s at a maximum velocity of 12000 RPM,
and is driven by a permanent synchronous motor (PMSM).
The lubrication system of this compressor is specifically designed to
prevent contamination of the air from lubricating oil.

The test bench is controlled by
the National Instruments CompactRIO realtime controller
and data acquisition system.
The motor quadratic current calculated by the real time controller in
$(d,q)$ coordinate is transformed by
an inverter between the CompactRIO and the compressor
to 3-phase coordinate in order to control the PMSM.
The fault reconstruction scheme and HIL simulation structure
is shown in Fig. 2.
The nominal value of the parameters for the simulation are shown in
Table 1.

The air mass-flow measured at the output of the compressor is fed
to the real time fuel cell emulation system, programmed in Labview.
The stack current shown in Fig. 7  is varied between 100A and 450A in order to deal with different load variation which corresponds the flow rate
variation between 0g/s and 28g/s.
The detail of the real time controller is available in
\cite{matraji2013}.

Figs. 3-6 show the states estimation of the system
(\ref{eqn:fuel cell model}).
Fig. 7 shows that the adaptive law gives a good estimate for the stack current which is
considered as an unknown parameter $\theta$.
Based on the estimates of the proposed observer,
the fault signal is reconstructed faithfully as shown in Fig. 8.
The time history of the adaptive-gain $L(t)$
is shown in Fig. 9, where the convergence of the proposed observer is ensured.
\section{CONCLUSION}
This paper has proposed a robust fault reconstruction method for
a class of nonlinear uncertain systems with Lipschitz nonlinearities
based on an adaptive STW sliding mode observer.
An adaptive update law has been given to identify the uncertain parameters.
An adaptive-gain STW observer is proposed to estimate the
system state variables exponentially even in the presence of uncertain
parameters and fault signals without requiring any information on the boundaries of the fault
and its time derivative.
Furthermore, the obtained equivalent output error injection
was then used to reconstruct the possible faults in the system.
The proposed fault reconstruction approach
was successfully implemented on a Hardware-In-Loop test bench which consists
of a commercial twin screw compressor and a real time fuel cell emulation system.
The fault signal represents as
a failure of the fuel cell air circuit, i.e. the pipe connecting the air-feed compressor to the fuel cell cathode explodes suddenly,
was reconstructed precisely.
The experimental results have shown that
the proposed approach is effective and feasible.
In future, other faults that affect fuel cell performance,
such as drying or flooding at the cell stack and starvation will
also be included.
\begin{figure}[H]
\begin{center}
\includegraphics[width=4in]{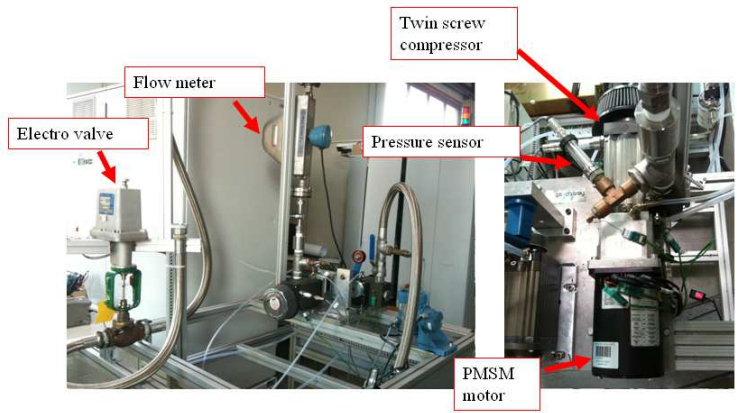}
\caption{Test Bench}
\label{fig:1}
\end{center}
\end{figure}
\begin{figure}[H]
\begin{center}
\includegraphics[width=4in]{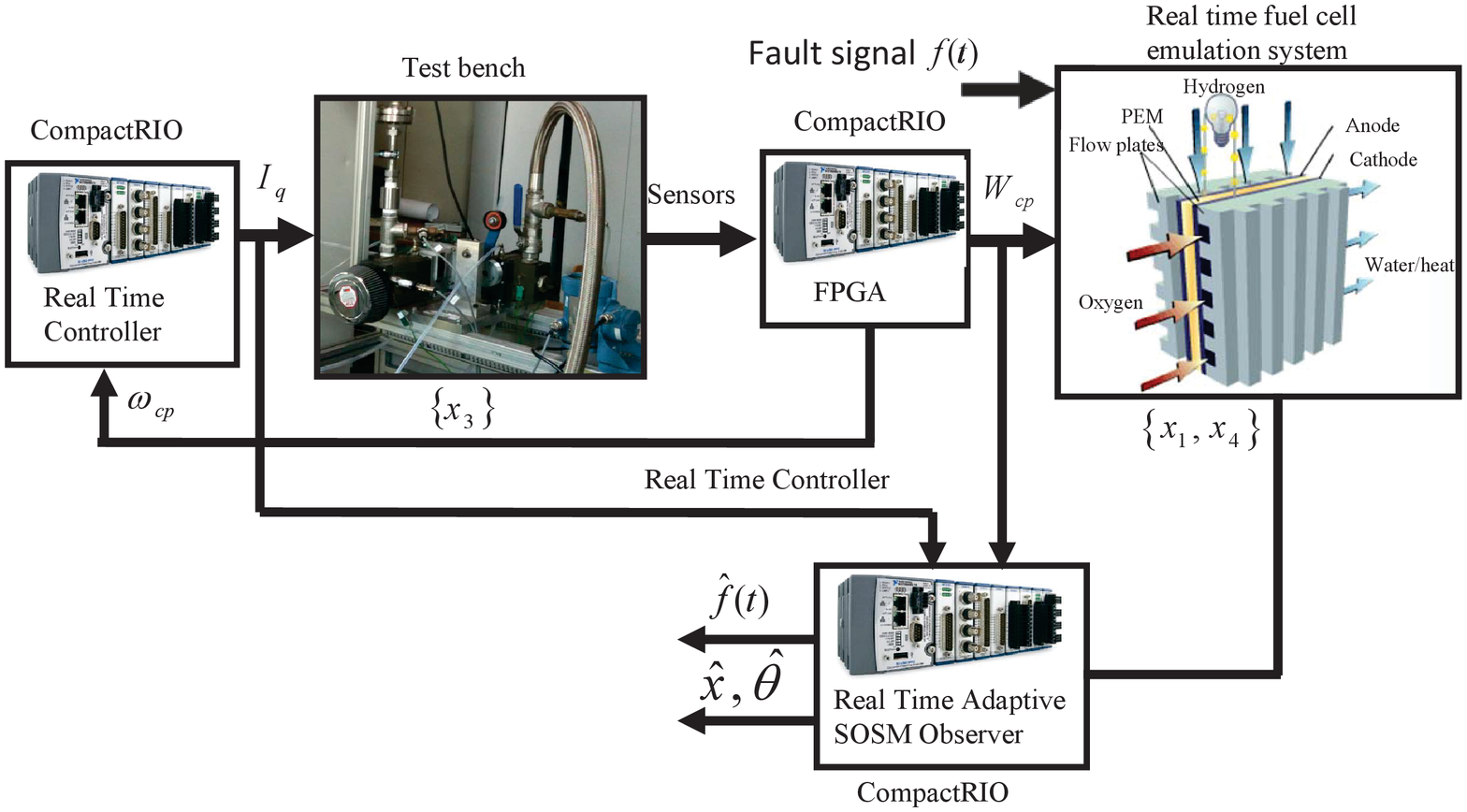}
\caption{Fault Reconstruction Scheme implemented on a Hardware-In-Loop Simulator}
\label{fig:2}
\end{center}
\end{figure}
\begin{table}[t]
\centering
\caption{\label{table:1}Parameters used in the simulation}
\begin{tabular}{ccc}
\hline
Symbol& Parameter & Value \\
\hline
$n$ &Number of cells in fuel cell stack & 90 \\
$R$ & Universal gas constant & 8.314 J/(mol K) \\
$R_a$ &Air gas constant & 286.9 J/(kg K) \\
$p_{atm}$ & Atmospheric pressure & 1.01325 $\times 10^5$ Pa \\
$T_{atm}$ & Atmospheric temperature & 298.15 K \\
$T_{fc}$ & Temperature of the fuel cell & 353.15 K \\
$F$ & faraday constant & 96485 C/mol \\
$M_a$ & Air molar mass & 28.9644$\times 10^{-3}$ kg/mol \\
$M_{O_2}$ & Oxygen molar mass & 32$\times 10^{-3}$ kg/mol \\
$M_{N_2}$ & Nitrogen molar mass & 28$\times 10^{-3}$ kg/mol \\
$M_v$ & Vapor molar mass & 18.02$\times 10^{-3}$ kg/mol \\
$C_D$ & Discharge of the nozzle & 0.0038 \\
$A_T$ & Operating area of the nozzle & 0.00138 $m^2$ \\
$\gamma$ & Ratio of specific heats of air & 1.4 \\
$J_{cp}$ & Compressor inertia & 671.9 $\times 10^{-5}$ kg $m^2$ \\
$f$ & Motor friction & 0.00136 V/(rad/s)\\
$k_t$ & Motor constant & 0.31 N m/A\\
$C_p$ & Constant pressure specific heat of air & 1004 J/(kg K)\\
$\eta_{cp}$ & Compressor efficiency & 80\%\\
$\eta_{cm}$ & Motor mechanical efficiency & 98\%\\
$V_{ca}$ & Cathode volume & 0.0015 $m^3$\\
$V_{sm}$ & Supply manifold volume & 0.003 $m^3$\\
$V_{cpr/tr}$ & Compressor volume per turn & 5$\times 10^{-4}m^3/tr$\\
$k_{ca,in}$ & Cathode inlet orifice constant & 0.3629$\times 10^{-5}kg/(Pa s)$\\
$k_{ca,out}$ & Cathode outlet orifice constant & 0.76$\times 10^{-4}kg/(Pa s)$\\
$\rho_a$ & Air density & 1.23 kg/$m^3$\\
$x_{O_2,ca,in}$ & Oxygen mass fraction & 0.23\\
$\mu$ & Smoothing filter time constant & 0.01 s\\
$L(0)$& Initial value of the adaptive gain L(t)&5000\\
$k$&Design parameter in (\ref{eqn:adaptive gains rt})&500\\
\hline
\end{tabular}
\end{table}
\begin{figure}[H]
\begin{center}
\includegraphics[width=4in]{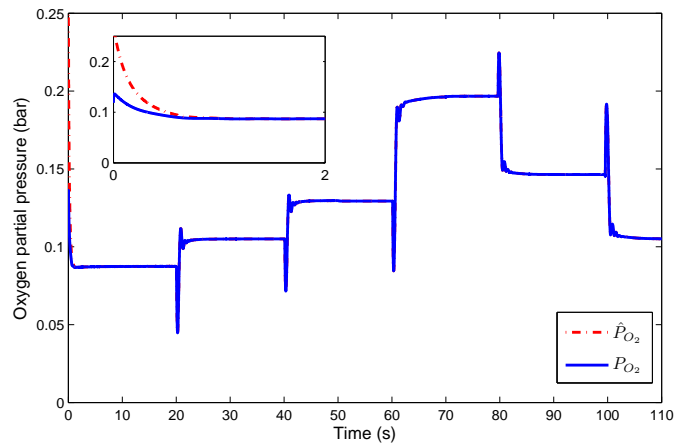}
\caption{Oxygen partial pressure estimation}
\label{fig:3}
\end{center}
\end{figure}
\begin{figure}[H]
\begin{center}
\includegraphics[width=4in]{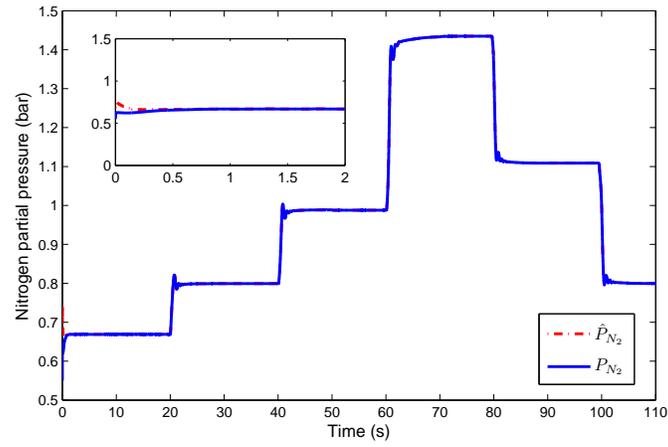}
\caption{Nitrogen partial pressure estimation}
\label{fig:4}
\end{center}
\end{figure}
\begin{figure}[H]
\begin{center}
\includegraphics[width=4in]{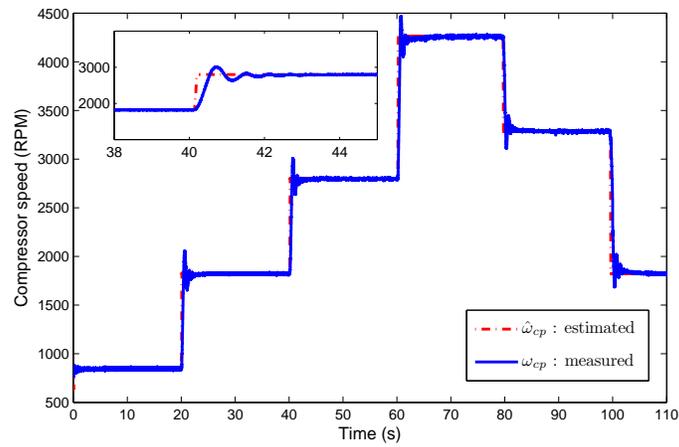}
\caption{Compressure speed estimation}
\label{fig:5}
\end{center}
\end{figure}
\begin{figure}[H]
\begin{center}
\includegraphics[width=4in]{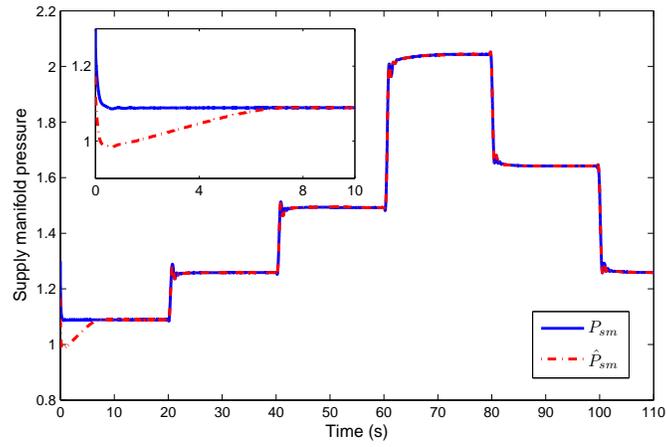}
\caption{Supply manifold pressure estimation}
\label{fig:6}
\end{center}
\end{figure}
\begin{figure}[H]
\begin{center}
\includegraphics[width=4in]{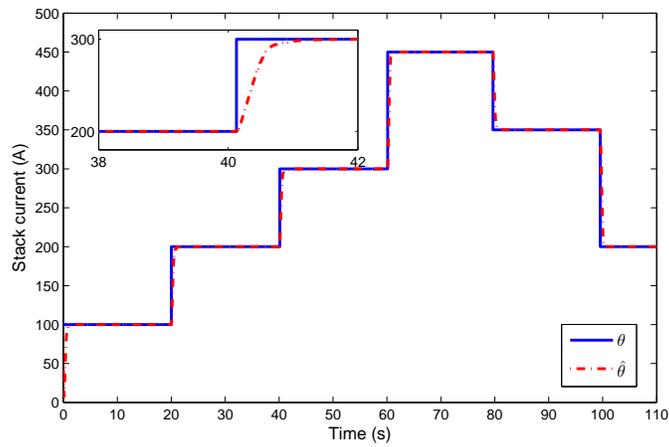}
\caption{Adaptive estimation of the stack current $\xi$}
\label{fig:7}
\end{center}
\end{figure}
\begin{figure}[H]
\begin{center}
\includegraphics[width=4in]{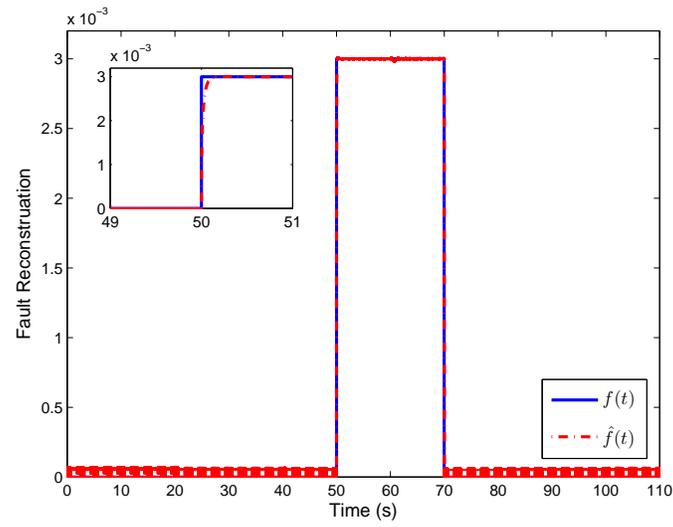}
\caption{Fault reconstruction $f$ and $\hat{f}$}
\label{fig:8}
\end{center}
\end{figure}
\begin{figure}[H]
\begin{center}
\includegraphics[width=4in]{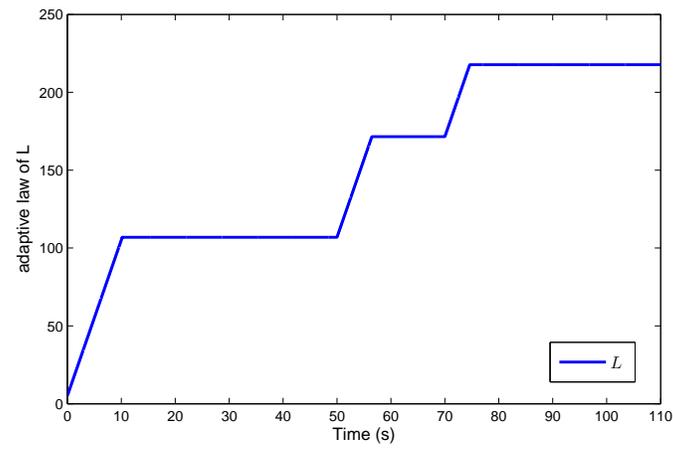}
\caption{Adaptive law of $L(t)$}
\label{fig:9}
\end{center}
\end{figure}
\appendix
\section{}
\begin{tabular}[H]{ll}
$c_1=\frac{\bar{R}T_{st}k_{ca,in}}{M_{O_2}V_{ca}}
\left(\frac{x_{O_2,atm}}{1+\omega_{atm}}\right)$;&$c_2=P_{sat}$\\
$c_3=\frac{\bar{R}T_{st}}{V_{ca}}$;&$c_4=\frac{\bar{R}T_{st}n}{4V_{ca}F}$\\
$c_8=\frac{\bar{R}T_{st}k_{ca,in}}{M_{N_2}V_{ca}}
\left(\frac{1-x_{O_2,atm}}{1+\omega_{atm}}\right)$;\\
$c_9=\frac{\eta_{cm}k_t}{J_{cp}}$;&$c_{10}=\frac{C_pT_{atm}}{J_{cp}\eta_{cp}}$\\
$c_{11}=P_{atm}$;&$c_{12}=\frac{\gamma-1}{\gamma}$\\
$c_{13}=\frac{\eta_{cm}k_t}{J_{cp}}$;&$c_{14}=
\frac{\gamma\bar{R}T_{atm}}{M_{a,atm}V_{sm}}$\\
$c_{15}=\frac{1}{\eta_{cp}}$;&$c_{16}=k_{ca,in}$\\
$c_{17}=\frac{C_DA_T}{\sqrt{\bar{R}T_{st}}}
\sqrt{\frac{2\gamma}{\gamma-1}}$;&$c_{18}=\frac{1}{\gamma}$\\
$A=\frac{1}{2\pi}\eta_{\upsilon-c}V_{cpr/tr}\rho_a$;&$c_{19}=\left( \frac{2}{\gamma+1}\right)^{\frac{\gamma}{\gamma-1}}$\\
$c_{20}=\frac{C_DA_t}{\sqrt{\bar{R}T_{st}}}\gamma^{\frac{1}{2}}\left( \frac{2}{\gamma+1}\right)^{\frac{\gamma+1}{2(\gamma-1)}}$.
\end{tabular}
\bibliographystyle{IEEEtran}
\bibliography{adaptive_stw_obs}
\end{document}